\newcommand{\R}{\mathbb R} 
\newcommand{\QQ}{\mathcal{Q}} 
\newcommand{\I}{\mathcal{I}}
\newcommand{\bx}{\boldsymbol{x}}
\newcommand{\bh}{\boldsymbol{h}}
\newcommand{\by}{\boldsymbol{y}}
\newcommand{\bv}{\boldsymbol{v}}
\newcommand{\bmm}{\boldsymbol{m}}
\newcommand{\bpi}{\boldsymbol{\pi}}
\newtheorem{lemma}{Lemma}
\newtheorem{definition}{Definition}
\newtheorem{theorem}{Theorem}
\newtheorem{case}{Case}
\title{On the Identifiability of Latent Class Models for Multiple-Systems Estimation}
\author{Serge Aleshin-Guendel\\
Department of Biostatistics, University of Washington\\
aleshing@uw.edu}
\begin{document}

\maketitle
\begin{abstract}
Latent class models have recently become popular for multiple-systems estimation in human rights applications. However, it is currently unknown when a given family of latent class models is identifiable in this context. We provide necessary and sufficient conditions on the number of latent classes needed for a family of latent class models to be identifiable. Along the way we provide a mechanism for verifying identifiability in a class of multiple-systems estimation models that allow for individual heterogeneity. 

Keywords: Capture-recapture; Heterogeneity; Population size estimation.
\end{abstract}

\section{Introduction}
Multiple-systems estimation, also known as capture-recapture in ecological settings, is an approach to estimating hard to reach population sizes which has been used in a number of fields, including epidemiology, official statistics, and human rights \citep{Hook_1995a, Bird_2018, Ball_2019b}. In this setting, multiple sources have incompletely sampled from a closed population of interest, and individuals sampled by more than one source are able to be uniquely identified. The population size is estimated based on the observed overlap of the sources using a model that describes how individuals were sampled by the sources. A common problem in multiple-systems estimation is that individuals may be sampled heterogeneously, i.e. different individuals may have different probabilities of being sampled by each source. In order to come up with reliable estimates of the population size in these settings, one needs to take this heterogeneity into account in the model describing the sampling process of the sources. 

Two classic models that incorporate individual heterogeneity are the $M_{th}$ and $M_{h}$ models \citep{Otis_1978}. The $M_{th}$ model assumes that each individual is independently sampled by each source, conditional on latent probabilities of being sampled by each source. The $M_h$ model additionally assumes that an individual has the same probability of being sampled by each source. The $M_h$ model can be plausible in ecological settings where researchers have the ability to design experiments where animals have the same probability of being sampled by each source. However, the assumption that an individual has the same probability of being sampled by each source is typically not plausible in human populations where sources often use convenience samples \citep{Ball_2019b}. This motivates the use of $M_{th}$ models in such settings. In particular, \cite{Manrique-Vallier_2016} recently proposed a family of latent class models with a large number of classes, a type of $M_{th}$ model, which has become popular in human rights and other human population settings \citep{Sadinle_2018, Ball_2018b, Ball_2018c, Manrique-Vallier_2019b, Angel_2019, Ball_2019a, Doshi_2019, Okiria_2019}. 

When using either $M_{h}$ or $M_{th}$ models, it is known that one must restrict oneself to a parametric family of models for identification, which has generated a literature characterizing identifiability in $M_{h}$ models \citep{Huggins_2001, Link_2003, Holzmann_2006, Link_2006}. Once identifiability is settled for a family of models, one can begin discussing properties of population size estimates using that family, such as consistency \citep{Sanathanan_1972} or finite-sample risk \citep{Johndrow_2019}. Currently, a literature does not exist characterizing identifiability in $M_{th}$ models, even though a large number of parametric $M_{th}$ model families have been proposed in the literature \citep{Agresti_1994, Coull_1999, Fienberg_1999, Pledger_2000, Bartolucci_2004, Durban_2005, King_2008}.

In this paper we partially close the gap between theory and methodology for $M_{th}$ models through two contributions. The first contribution is a mechanism for verifying identifiability in $M_{th}$ models based on moments of the distribution for the latent sampling probabilities. The second contribution is a necessary and sufficient condition for families of latent class models to be identified. Our result shows that recent applications using latent class models for multiple-systems estimation have been based on nonidentifiable families of models. 

\section{Preliminaries}
\subsection{Model Description}
Suppose $K\geq2$ sources sample individuals in a closed population of size $N$, of which only $n<N$ are sampled by at least one source. We let $H=\{0,1\}^K$ denote the possible inclusion patterns of individuals in the sources, $H^*=H\setminus \{0\}^K$ denote the possible inclusion patterns of the $n$ individuals sampled in at least one source, and $\bx_i\in H$ denote the inclusion pattern for individual $i$. For example, if $K=4$ and $\bx_i=(1, 1, 1,0)$, individual $i$ was sampled by sources $1, 2,$ and $3$, but not by source $4$. The $\bx_i$ can be aggregated into a $2^K$ contingency table, with cells indexed by $\bh\in H$ and cell counts $n_{\bh}=\sum_{i=1}^NI(\bx_i=\bh)$. We do not observe the count for the cell $\{0\}^K$, $n_{(0,\ldots,0)}=N-n$. The target of inference is the population size $N$.

In this article, we assume that that the individuals' inclusion patterns follow an $M_{th}$ model \citep{Otis_1978}, i.e. 
\begin{align}
\label{eq:mth}
\begin{split}
    x_{i,k}\mid \lambda_{i,k} &\stackrel{iid}{\sim} \textsc{Bernoulli}(\lambda_{i,k}),\\
    (\lambda_{i,1},\ldots, \lambda_{i,K})&\stackrel{iid}{\sim} Q,
\end{split}
\end{align}
where $Q\in\QQ$ and $\QQ$ is a family of mixing distributions on $(0,1]^K$ for the latent sampling probabilities. Under this model, conditional on an individual's sampling probabilities, $(\lambda_{i,1},\ldots, \lambda_{i,K})$, the individual is sampled by each source $k$ with probability $\lambda_{i,k}$, independently of all other sources. By imposing the restriction that $\lambda_{i,k}\in(0,1]^K$ for each individual $i$ and source $k$, we are assuming that each of the $N$ individuals has non-zero probability of being sampled by at least one source. The $M_{h}$ model \citep{Otis_1978} is a submodel of the $M_{th}$ model which further assumes that the latent sampling probabilities are equal for all sources, i.e. $\lambda_{i,1}=\cdots= \lambda_{i,K}$.

\subsection{Identifiability in $M_{th}$ Models}
Marginalizing over $Q$ in \eqref{eq:mth}, we find that the complete $2^K$ contingency table of counts is multinomially distributed, i.e.
\begin{equation}
    \label{eq:completedatadist}
    (n_{\bh})_{\bh\in H}\mid N, \bpi_Q \sim \textsc{Multinomial}(N, \bpi_Q)
\end{equation}
where $\bpi_Q=(\pi_{Q,\bh})_{\bh\in H}$, $\pi_{Q,\bh}=E_Q\{\prod_{k=1}^K \lambda_{k}^{h_k}(1-\lambda_{k})^{1-h_k}\}$, and $E_Q$ denotes an expectation with respect to the mixing distribution $Q$. Throughout this article when a vector or matrix is indexed by $\bh\in H$ or $H^*$ we use the order given by viewing $\bh$ as binary digits. For example, when $K=3$ we order $H$ as $(0,0,0)$, $(0,0,1)$, $(0,1,0)$, $(0,1,1)$, $(1,0,0)$, $(1,0,1)$, $(1,1,0)$, $(1,1,1)$.

The likelihood in \eqref{eq:completedatadist} can be decomposed as 
\begin{align}
\label{eq:decomp}
    n\mid N, \pi_{Q,0} &\sim\textsc{Binomial}(N, 1-\pi_{Q,0}), \nonumber\\
    (n_{\bh})_{\bh\in H^*}\mid n, \tilde{\bpi}_Q &\sim \textsc{Multinomial}(n, \tilde{\bpi}_Q),
\end{align}
where $\pi_{Q,0}=\pi_{Q,(0,\ldots,0)}$, $\tilde{\bpi}_Q=(\tilde{\pi}_{Q,\bh})_{\bh\in H^*}$, and $\tilde{\pi}_{Q,\bh}=\pi_{Q,\bh}/(1-\pi_{Q,0})$. The multinomial likelihood for the observed cell counts, $(n_{\bh})_{\bh\in H^*}$, conditional on their sum, $n$, in \eqref{eq:decomp} is referred to as the conditional likelihood \citep{Fienberg_1972}. Intuitively, in order to estimate $N$, the conditional cell probabilities, $\tilde{\bpi}_Q$, estimated from the conditional likelihood need to determine the missing cell probability, $\pi_{Q,0}$. The following definition of identifiability, modified from \cite{Link_2003}, codifies this intuition. If a family of distributions $\QQ$ is identifiable according to this definition, then $N$ can be consistently estimated within $\QQ$ \citep{Sanathanan_1972}.
\begin{definition}
\label{def:ident}
A family of distributions $\QQ$ on $(0,1]^K$ is identifiable if, for $Q,R\in\QQ$, $\tilde{\bpi}_Q=\tilde{\bpi}_R$ implies that $\pi_{Q,0}=\pi_{R,0}$.
\end{definition}

\subsection{Motivating Example: Latent Class Models}
\label{sec:lcmprelim}
Latent class models are a classical tool for the analysis of multivariate categorical data that describe populations which can be stratified into $J$ classes, in which the latent sampling probabilities are homogeneous for individuals within each class \citep{Goodman_1974, Haberman_1979}. They form a special case of the $M_{th}$ model, where the mixing distribution is a discrete finite mixture, and have been used for multiple-systems estimation many times \citep{Agresti_1994, Coull_1999, Pledger_2000, Bartolucci_2004, Manrique-Vallier_2016}. We denote the family of latent class models with $J$ classes by $\QQ_J = \{ Q = \sum_{j=1}^J\nu_{Q, j}\prod_{k=1}^K\delta_{\lambda_{Q,jk}} \mid \nu_{Q, j}\geq 0, \sum_{j=1}^J\nu_{Q, j}=1, \lambda_{Q,jk}\in(0,1]^K\}$. It is currently unknown when $\QQ_J$ is identified.
 
\section{Verifying Identifiability via Mixed Moments}
In this section, we aim to provide a mechanism for directly checking Definition \ref{def:ident}, to verify identifiability of a given family $\QQ$. Before proving the main theorem of this section, we have the following lemma, which tells us that cell probabilities for any $M_{th}$ model only depends on the mixing distribution, $Q$, through mixed moments of $Q$. 
\begin{lemma}
\label{lemma:mixed}
For any $\bh\in H^*$, $\pi_{Q,\bh}=\sum_{\bh'\in H^*} c_{\bh, \bh'}m_{Q,\bh'}$ where $c_{\bh, \bh'}=(-1)^{\sum_{k=1}^K h'_k - h_k}\prod_{k=1}^K I(h_k \leq h'_k)$ and $m_{Q,\bh'}=E_Q(\prod_{k=1}^K \lambda_k^{h_k'})$.
\end{lemma}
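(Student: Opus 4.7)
The plan is to prove the identity by direct algebraic expansion of the integrand defining $\pi_{Q,\bh}$, then interchanging sum and expectation. Recall $\pi_{Q,\bh}=E_Q\{\prod_{k=1}^K \lambda_k^{h_k}(1-\lambda_k)^{1-h_k}\}$. The integrand splits as $\prod_{k:h_k=1}\lambda_k \cdot \prod_{k:h_k=0}(1-\lambda_k)$, so the only nontrivial step is expanding the second product.

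First I would apply the finite binomial-style expansion
\[
\prod_{k:h_k=0}(1-\lambda_k)=\sum_{S\subseteq\{k:h_k=0\}}(-1)^{|S|}\prod_{k\in S}\lambda_k,
\]
and re-index the sum by letting $\bh'\in H$ be the indicator vector of $\{k:h_k=1\}\cup S$. The bijection between subsets $S$ of $\{k:h_k=0\}$ and binary vectors $\bh'$ with $\bh'\geq \bh$ componentwise is what makes the identity come out cleanly. Under this correspondence, $|S|=\sum_k(h'_k-h_k)$, and the product $\prod_{k:h_k=1}\lambda_k\cdot\prod_{k\in S}\lambda_k$ collapses to $\prod_{k=1}^K \lambda_k^{h'_k}$, since each coordinate of $\bh'$ is $1$ exactly on $\{k:h_k=1\}\cup S$.

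Putting these pieces together gives the pointwise identity
\[
\prod_{k=1}^K \lambda_k^{h_k}(1-\lambda_k)^{1-h_k}=\sum_{\bh'\in H,\,\bh'\geq \bh}(-1)^{\sum_k(h'_k-h_k)}\prod_{k=1}^K\lambda_k^{h'_k}.
\]
Taking $E_Q$ of both sides and using linearity yields $\pi_{Q,\bh}=\sum_{\bh'\geq \bh}(-1)^{\sum_k(h'_k-h_k)}m_{Q,\bh'}$. The constraint $\bh'\geq \bh$ is exactly what the factor $\prod_k I(h_k\leq h'_k)$ encodes, and the sign $(-1)^{\sum_k(h'_k-h_k)}$ matches the definition of $c_{\bh,\bh'}$, so the sum can be rewritten as $\sum_{\bh'\in H^*}c_{\bh,\bh'}m_{Q,\bh'}$; extending the index set from $\{\bh'\geq \bh\}$ to all of $H^*$ is harmless because $c_{\bh,\bh'}=0$ whenever $\bh'\not\geq \bh$, and restricting further to $H^*$ (rather than $H$) is valid because $\bh\in H^*$ forces every $\bh'\geq \bh$ to be nonzero.

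There is no serious obstacle here; the proof is purely combinatorial expansion, and the only place a slip could occur is in the bookkeeping that links subsets $S$ with binary vectors $\bh'$ and checks that the domain $H^*$ in the stated sum is consistent with the nonzero entries of $c_{\bh,\bh'}$. I would therefore make those two observations explicit before declaring the identity proved.
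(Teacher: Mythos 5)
Your proof is correct and follows essentially the same route as the paper: the paper's proof is just the same pointwise expansion (referred to there as the multi-binomial theorem) followed by taking expectations, while you spell out the subset re-indexing and the check that restricting the sum to $H^*$ is harmless. No changes needed.
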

\begin{proof}
    For all $\bh\in H^*$, $\prod_{k=1}^K \lambda_k^{h_k}(1-\lambda_k)^{1-h_k}=
    \sum_{\bh'\in H^*} c_{\bh, \bh'} \prod_{k=1}^K \lambda_k^{h'_k}$ by an application of the multi-binomial theorem. The result follows from taking the expectation over both sides with respect to $Q$.
\end{proof}

We can restate Lemma \ref{lemma:mixed} in matrix form. Letting  $\bpi_Q^*=(\pi_{Q,\bh})_{\bh\in H^*}$ and $\bmm_Q=(m_{Q,\bh})_{\bh\in H^*}$, we have that $\bpi_Q^*=C\bmm_Q$, where $C=(c_{\bh, \bh'})_{\bh\in H^*, \bh'\in H^*}$. $C$ is invertible as it is upper triangular with non-zero diagonal entries. We are now ready to prove Theorem \ref{theorem:moments}.
\begin{theorem}
\label{theorem:moments}
For any two distributions $Q,R$ on $(0,1]^K$, $\tilde{\bpi}_Q=\tilde{\bpi}_R$ is equivalent to $\bmm_Q=A \bmm_R$ for some $A>0$. 
\end{theorem}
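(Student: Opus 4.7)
The plan is to exploit the matrix form of Lemma~\ref{lemma:mixed}, namely $\bpi_Q^* = C\bmm_Q$ with $C$ invertible, together with the observation that the normalizing constant $1-\pi_{Q,0}$ is itself a linear functional of $\bmm_Q$. Once those two facts are in place, the statement reduces to a short algebraic manipulation in both directions.

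For the ``only if'' direction, I would start from $\tilde{\bpi}_Q = \tilde{\bpi}_R$ and set $A = (1-\pi_{Q,0})/(1-\pi_{R,0})$. This is a strictly positive scalar because $\lambda_k \in (0,1]$ forces $\prod_{k}(1-\lambda_k) < 1$ pointwise on the support of any $Q$ on $(0,1]^K$, so $1-\pi_{Q,0}>0$ and likewise for $R$. The identity $\tilde{\bpi}_Q = \tilde{\bpi}_R$ then rearranges to $\bpi_Q^* = A\bpi_R^*$, and invoking Lemma~\ref{lemma:mixed} gives $C\bmm_Q = AC\bmm_R$. Multiplying by $C^{-1}$ yields $\bmm_Q = A\bmm_R$.

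For the ``if'' direction, assume $\bmm_Q = A\bmm_R$ for some $A>0$. Applying Lemma~\ref{lemma:mixed} (in matrix form) immediately gives $\bpi_Q^* = C\bmm_Q = AC\bmm_R = A\bpi_R^*$. To transfer this scaling to the conditional probabilities I need the denominators to scale by the same $A$. Here I would expand
\begin{equation*}
1-\pi_{Q,0} = 1 - E_Q\!\left[\prod_{k=1}^K(1-\lambda_k)\right] = -\sum_{\bh\in H^*}(-1)^{\sum_k h_k} m_{Q,\bh}
\end{equation*}
using the multi-binomial expansion, which exhibits $1-\pi_{Q,0}$ as a fixed linear functional $\bv^\top \bmm_Q$ of $\bmm_Q$ with no constant term. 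Consequently $1-\pi_{Q,0} = A(1-\pi_{R,0})$, and dividing componentwise gives $\tilde{\bpi}_Q = \tilde{\bpi}_R$.

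The main thing to be careful about is really just the bookkeeping around the normalizing constant: one must notice that the constant term in the multi-binomial expansion of $\pi_{Q,0}$ is absorbed when we form $1-\pi_{Q,0}$, so that this quantity is linear (not merely affine) in $\bmm_Q$ and therefore rescales by the same factor $A$ as $\bpi_Q^*$ does. Beyond this observation, the proof is routine once one has the invertibility of $C$ and Lemma~\ref{lemma:mixed} in hand; I would not expect any genuine technical obstacle.
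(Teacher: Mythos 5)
Your proof is correct and follows essentially the same route as the paper: take $A=(1-\pi_{Q,0})/(1-\pi_{R,0})>0$, rearrange to $\bpi_Q^*=A\bpi_R^*$, and apply the matrix form of Lemma~\ref{lemma:mixed} with the invertibility of $C$; your explicit verification that $1-\pi_{Q,0}$ rescales by the same $A$ in the converse direction merely fills in a step the paper leaves implicit in the word ``equivalent.'' (For that step you could also just note that $1-\pi_{Q,0}=\sum_{\bh\in H^*}\pi_{Q,\bh}$ is the sum of the entries of $\bpi_Q^*$, which gives the scaling of the denominator without re-expanding in moments.)
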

\begin{proof}
$\tilde{\bpi}_Q=\tilde{\bpi}_R$ is equivalent to $\bpi_Q^*/(1-\pi_{Q,0})=\bpi_R^*/(1-\pi_{R,0}).$ Rearranging terms we have that $\bpi_Q^*=\bpi_R^*(1-\pi_{Q,0})/(1-\pi_{R,0}),$ and thus $\bpi_Q^*=A\bpi_R^*$, where $A=(1-\pi_{Q,0})/(1-\pi_{R,0})>0$. Using Lemma \ref{lemma:mixed}, this is equivalent to $C\bmm_Q=AC\bmm_R$, and thus $\bmm_Q=A \bmm_R$ due to the invertibility of $C$. 
\end{proof}

The immediate consequence of Theorem \ref{theorem:moments} is that to verify identifiability of a family $\QQ$, one can demonstrate that if $\bmm_Q=A \bmm_R$ for some $Q,R\in\QQ$, then $\pi_{Q,0}=\pi_{R,0}$. We use this mechanism in the next section to characterize when latent class models are identifiable.

\section{When are Latent Class Models Identifiable?}
\subsection{The Answer}
To provide necessary and sufficient conditions for the family of $J$-class latent class models, $\QQ_J$, to be identifiable, we restrict the family defined in Section \ref{sec:lcmprelim} to 
$\QQ_J = \{ Q = \sum_{j=1}^J\nu_{Q, j}\prod_{k=1}^K\delta_{\lambda_{Q,jk}} \mid \nu_{Q, j}\geq 0, \sum_{j=1}^J\nu_{Q, j}=1, \lambda_{Q,jk}\in(0,1]^K , \lambda_{Q,jk}\neq \lambda_{Q,j'k} \text{ for } j\neq j'\}$.
This restriction makes the mild assumption that each class' sampling probabilities are distinct, which simplifies the proof of Theorem \ref{thm:lcmthm}. Loosening this restriction could only make the conditions on $J$ for $\QQ_J$ to be identifiable stricter, and thus the conclusions we reach in the following section would still stand for families where this restriction is violated.

There are $J(K+1) - 1$ parameters in $\QQ_J$, thus when $\QQ_J$ is  identifiable, $J$ satisfies $J(K+1) - 1\leq 2^K -2$, as the conditional cell probabilities, $\tilde{\bpi}_Q$, are $2^K -2$ dimensional. However, we now prove that $J$ must satisfy a stricter condition for $\QQ_J$ to be identifiable. 

\begin{theorem}
\label{thm:lcmthm}
$\QQ_J$ is identifiable iff $2J\leq K$.
\end{theorem}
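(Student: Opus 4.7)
The plan is to leverage Theorem~\ref{theorem:moments}: $\QQ_J$ is identifiable iff any $Q,R \in \QQ_J$ satisfying $\bmm_Q = A\bmm_R$ must have $A=1$. I would handle the two directions separately, using an evaluation-of-generating-polynomial trick for sufficiency and an explicit Vandermonde construction for necessity.

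For sufficiency ($2J \leq K$), suppose $\bmm_Q = A\bmm_R$ and write the signed measure $\mu := Q-AR = \sum_{l=1}^L c_l \delta_{r_l}$ with $r_l \in (0,1]^K$ distinct and $L \leq 2J$. The hypothesis is that $m_S(\mu)=0$ for every $S \neq \emptyset$, while $m_\emptyset(\mu) = 1-A$. I would then introduce
\[
F(t_1,\ldots,t_K) \;=\; \sum_{l=1}^L c_l \prod_{k=1}^K (1 + t_k r_{l,k}),
\]
whose monomial expansion has $\prod_{k \in S} t_k$-coefficient equal to $m_S(\mu)$; by the hypothesis, $F \equiv 1-A$ as a polynomial. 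Since $L \leq 2J \leq K$, I can select distinct indices $k_1,\ldots,k_L \in \{1,\ldots,K\}$ (say $k_l = l$) and set $t_{k_l}^* = -1/r_{l,k_l}$, with other entries of $t^*$ arbitrary. Each summand of $F$ then carries a zero factor, so $F(t^*)=0$, forcing $A=1$ and hence $\pi_{Q,0}=\pi_{R,0}$.

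For necessity ($2J > K$, i.e., $K+1 \leq 2J$), I would construct an explicit counterexample by placing $L = K+1$ points on the diagonal: $r_l = (x_l,\ldots,x_l)$ for distinct $x_l \in (0,1]$. On the diagonal, $m_S(\mu) = \sum_l c_l x_l^{|S|}$ depends only on $|S|$, so the moment-vanishing conditions reduce to the Vandermonde-type system $\sum_l c_l x_l^d = 0$ for $d=1,\ldots,K$. Invertibility of the full $(K+1)\times(K+1)$ Vandermonde matrix produces a one-dimensional null space with $\sum_l c_l \neq 0$, and the cofactor-sign structure of that null vector guarantees that both $|\{l:c_l>0\}|$ and $|\{l:c_l<0\}|$ are at most $\lceil(K+1)/2\rceil \leq J$. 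Rescaling so the positive weights sum to $1$, I split $\mu$ into $Q$ (the positive part) and $AR$ (the negated negative part), yielding $Q,R \in \QQ_J$ with distinct per-class sampling probabilities, $\bmm_Q = A\bmm_R$, and $A \neq 1$.

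The key step, and the one I expect to be the main obstacle, is the forward direction's ``kill each term with one fresh coordinate'' trick: it requires $L$ distinct coordinates, which is exactly the content of $2J \leq K$. Once $L$ would exceed $K$ the trick breaks down, and the necessity construction shows this breakdown is essential rather than a proof artifact.
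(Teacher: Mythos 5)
Your argument is correct, and it takes a genuinely different route from the paper, most notably in the sufficiency direction. For $2J\le K$ the paper also passes to $\bmm_Q=A\bmm_R$ via Theorem~\ref{theorem:moments}, but then writes the moment equations as $\Lambda\by=0$, proves by induction (Appendix 1) that the at-most-$K$-column moment matrix is full rank, and finishes with a three-case analysis on the overlap of the two support sets; this is where the restriction $\lambda_{Q,jk}\neq\lambda_{Q,j'k}$ is used. You instead form the signed measure $\mu=Q-AR$ with $L\le 2J\le K$ distinct atoms, observe that the multilinear polynomial $F(t)=\sum_{l}c_l\prod_{k}(1+t_kr_{l,k})$ has the mixed moments of $\mu$ as its nonconstant coefficients and hence is identically $1-A$, and then annihilate each atom with one fresh coordinate by setting $t^*_{k_l}=-1/r_{l,k_l}$; this needs only that the support points have nonzero coordinates, so it eliminates Appendix 1, the case analysis, and even the coordinatewise-distinctness restriction (it proves the sufficiency half for the unrestricted family of Section 2.3, sharpening the paper's remark that loosening the restriction could only make matters worse). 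For $2J>K$, both you and the paper use diagonal support so that the conditions collapse to univariate power-sum cancellations $\sum_l c_lx_l^d=0$, $d=1,\dots,K$; the paper writes down an explicit solution (binomial-coefficient weights, verified by differentiating $(1-e^{\alpha x})^{2J}$ at zero, with closed-form $A=2^{2J-1}/(2^{2J-1}-1)$, which also powers the numerical illustration in Section~\ref{sec:implications}), whereas you extract a solution from the one-dimensional null space of the $K\times(K+1)$ Vandermonde-type system, using the alternating cofactor signs (each deleted-column minor is a product of positive $x$'s times a Vandermonde determinant, hence positive once the $x_l$ are ordered) to get at most $\lceil(K+1)/2\rceil\le J$ atoms of each sign, then normalizing the positive part and padding with zero-weight classes to land in $\QQ_J$ with $A\neq1$. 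Your route is shorter and makes the threshold $K+1\le 2J$ transparent; the paper's counterexample is fully explicit, which is what it needs for Table~\ref{tab:expars}. The only step in your sketch that genuinely requires the verification indicated above is the sign-alternation claim, and it checks out.
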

\begin{proof}
We will first show that if $2J\leq K$, then $\QQ_J$ is identifiable. The proof of this direction is similar in spirit to the proofs of Theorem 2 in \cite{Holzmann_2006} and Theorem 1 in \cite{Pezzott_2019}, which were both concerned with characterizing the identifiability of the $M_h$ analogue of $\QQ_J$. Assume $2J\leq K$, and let $Q,R\in\QQ_J$ such that $\bmm_Q=A \bmm_R$ for some $A>0$, so that we have the following system of equations:
\begin{equation}
\label{eq:firstsyst}
    \sum_{j=1}^J\nu_{Q,j}\prod_{k=1}^K\lambda_{Q,jk}^{h_k} - A
    \sum_{j=1}^J\nu_{R,j}\prod_{k=1}^K\lambda_{R,jk}^{h_k}=0 \quad (\bh\in H^*).
\end{equation}
Let $\I_Q=\{j\mid \lambda_{Q,j}\not\in(\lambda_{R,1},\ldots, \lambda_{R,J})\}$ and $\I_R=\{j\mid \lambda_{R,j}\not\in(\lambda_{Q,1},\ldots, \lambda_{Q,J})\}$, where $\lambda_{Q,j}=(\lambda_{Q,j1},\ldots, \lambda_{Q,jK})$ and $\lambda_{R,j}=(\lambda_{R,j1},\ldots, \lambda_{R,jK})$. We can then rewrite \eqref{eq:firstsyst} as 
\begin{equation}
\label{eq:secondsyst}
    \sum_{j=1}^Jy_j\prod_{k=1}^K\lambda_{Q,jk}^{h_k} - A
    \sum_{i\in \I_R}^J\nu_{R,j}\prod_{k=1}^K\lambda_{R,jk}^{h_k}=0 \quad (\bh\in H^*),
\end{equation}
where $y_j=\nu_{Q,j}$ if $j\in\I_Q$ and $y_j=\nu_{Q,j}-A\nu_{R,j'}$ for some $j'\in\{1,\ldots, J\}\setminus\I_R$ otherwise. Letting $m=|\I_R|=|\I_Q|$ and labelling the elements of $\I_R$ as $i_1,\ldots,i_m$, the system of equations in \eqref{eq:secondsyst} can be written in matrix form as $\Lambda \by=0$, where
\begin{equation*}
\Lambda = 
\begin{pmatrix}
\lambda_{Q,1K} &  \cdots & \lambda_{Q,JK}&\lambda_{R,i_1K}&\cdots&\lambda_{R,i_mK} \\
\vdots  & \ddots& \vdots&\vdots  & \ddots& \vdots \\
\prod_{k=1}^K\lambda_{Q,1k}^{h_k}&\cdots& \prod_{k=1}^K\lambda_{Q,Jk}^{h_k}&\prod_{k=1}^K\lambda_{R,i_1k}^{h_k}&\cdots& \prod_{k=1}^K\lambda_{R,i_mk}^{h_k} \\
\vdots& \ddots & \vdots&\vdots  & \ddots& \vdots\\
\prod_{k=1}^K\lambda_{Q,1k} &\cdots & \prod_{k=1}^K\lambda_{Q,Jk}^{h_k}&\prod_{k=1}^K\lambda_{R,i_1k}&\cdots&\prod_{k=1}^K\lambda_{R,i_mk}
\end{pmatrix}, \quad
\by = 
\begin{pmatrix}
y_1  \\
\vdots  \\
y_J\\
-A\nu_{R,i_1}\\
\vdots\\
-A\nu_{R,i_m}
\end{pmatrix},
\end{equation*}
and the rows of $\Lambda$ are indexed by $\bh\in H^*$. In Appendix 1, we prove that $\Lambda$ is full rank, and thus $\by=0$, for any $m \in\{0,\ldots, J\}$. The proof of this direction concludes by examining three possible cases.

\begin{case}
Suppose $m=0$, i.e. for each $j\in\{1,\ldots, J\}$, there exists some $j'\in\{1,\ldots, J\}$ such that $\lambda_{Q,j}=\lambda_{R,j'}$ and $\nu_{Q,j}=A\nu_{R,j'}$. As $\sum_{j=1}^J \nu_{Q,j}=\sum_{j=1}^J \nu_{R,j}=1$, this implies that $A=1$ and thus $\pi_{Q,0}=\pi_{R,0}$.
\end{case}
\begin{case}
Suppose $m\in\{1,\ldots, J-1\}$, i.e. for each $j\in\{1,\ldots, J\}\setminus\I_Q$, there exists some $j'\in \{1,\ldots, J\}\setminus\I_R$ such that $\lambda_{Q,j}=\lambda_{R,j'}$ and $\nu_{Q,j}=A\nu_{R,j'}$. Further, for each $j\in\I_Q$ and $j'\in \I_R$ $\nu_{Q,j}=\nu_{R,j'}=0$. We can thus ignore the classes $j\in\I_Q$ and $j'\in \I_R$. As $\sum_{j=1}^J \nu_{Q,j}=\sum_{j=1}^J \nu_{R,j}=1$, this implies that $A=1$ and thus $\pi_{Q,0}=\pi_{R,0}$.
\end{case}
\begin{case}
Suppose $m=J$, i.e. for each $j\in\{1,\ldots, J\}$, there exists no $j'\in \{1,\ldots, J\}$ such that $\lambda_{Q,j}=\lambda_{R,j'}$. Then $\nu_{Q,j}=\nu_{R,j}=0$ for $j\in\{1,\ldots, J\}$, which is a contradiction.
\end{case}

We will now show that if $2J> K$, then $\QQ_J$ is not identifiable. To do so we will provide explicit $Q,R\in\QQ_J$ such that $\pi_{Q,0}\neq\pi_{R,0}$, but $\bmm_Q=A \bmm_R$ for $A>0$. This counterexample is modified from \cite{Tahmasebi_2018}, who studied identifiability of families of latent class models outside of the multiple-systems estimation context where $n_{(0,\ldots,0)}$ is observed. Choose $J$ such that $2J> K$. For $j\in\{1,\ldots, J\}$, let $\nu_{Q,j}=\binom{2J}{2j}/(2^{2J-1}-1)$ and $\nu_{R,j}=\binom{2J}{ 2j-1}/(2^{2J-1})$. For $j\in\{1,\ldots, J\}$ and $k\in\{1,\ldots, K\}$, let $\lambda_{Q,jk}=\alpha(2j)$ and $\lambda_{R,jk}=\alpha(2j - 1)$ where $0<\alpha <1 / (2J)$. We thus have that $Q,R\in\QQ_J$, where clearly $Q\neq R$. In Appendix 2 we prove that for these choices of $Q,R$, $\bmm_Q=A \bmm_R$ for $A>0$ such that $A\neq 1$, and thus $\pi_{Q,0}\neq\pi_{R,0}$.
\end{proof}

\subsection{The Implications for the Use of Latent Class Models}
\label{sec:implications}
Recently, \cite{Manrique-Vallier_2016} proposed to use a family of latent class models with an infinite number of classes, i.e. $\QQ_{\infty}=\cup_{J=1}^{\infty}\QQ_J$, for multiple-systems estimation. In practice, \cite{Manrique-Vallier_2016} restricted the actual family used to $\QQ_{J^*}$ for some large $J^*$, for computational purposes. Theorem \ref{thm:lcmthm} tells us that such a family is nonidentifiable if $2J^*>K$. \cite{Manrique-Vallier_2016} suggested setting $J^*=K$, which always results in a nonidentifiable family. In the $\texttt{R}$ \citep{R_2019} package $\texttt{LCMCR}$  \citep{LCMCR_2020} which implements the methodology of \cite{Manrique-Vallier_2016}, the default value of $J^*$ is $5$. Unless one is working with at least $K=10$ sources, which is rare outside of ecological applications, the family being used will not be identifiable. Extensions of \cite{Manrique-Vallier_2016}, such as \cite{Manrique-Vallier_2019b} and \cite{Kang_2020}, share the same problem with nonidentifiability when too many latent classes are used.

In their discussion, \cite{Manrique-Vallier_2016} write, ``[a]s \cite{Fienberg_1972} warns, multiple-recapture estimation --- as any other extrapolation technique --- relies on the untestable assumption that the model that describes the observed counts also applies to the unobserved ones." However, the problem is graver than this when working with a nonidentifiable family $\QQ$, as there can be multiple models that describe the observed counts. For example in the simplest case, consider data from $K=2$ sources generated from the two-class latent class model $Q$ with parameters given in Table \ref{tab:expars}. Under $Q$, $\tilde{\pi}_{Q,(0,1)}=0.276$, $\tilde{\pi}_{Q,(1,0)}=0.276$, $\tilde{\pi}_{Q,(1,1)}=0.448$, and $\pi_{Q,0}=0.316$. However, there is another two-class latent class model $R$, with parameters given in Table \ref{tab:expars}, such that $\tilde{\bpi}_{Q}=\tilde{\bpi}_{R}$ but $\pi_{R,0}=0.219$. Because the family $\QQ_2$ is not identified, if we try to perform estimation within $\QQ_2$, which contains the true data generating model, there is no guarantee that we can estimate well, in any traditional sense, the cell probabilities and population size which generated the data. In particular, nonidentifiability precludes consistent estimation as ``there will be uncertainty in parameter estimates that is not washed out as more data are collected" \citep{Linero_2017}. The proof of Theorem \ref{thm:lcmthm} shows us that such an example can be constructed whenever $2J>K$.

\begin{table}[ht]
\centering
\caption{Parameters of two latent class models which produce identical conditional cell probabilities, but different missing cell probabilities}
\label{tab:expars}
\begin{tabular}{lllllll}
    & $\nu_{1}$ & $\nu_{2}$ & $\lambda_{11}$ & $\lambda_{12}$ & $\lambda_{21}$ & $\lambda_{22}$ \\
    
$Q$ & 0.5       & 0.5       & 0.2475         & 0.2475         & 0.7425         & 0.7425   \\
$R$ & 0.8571429     & 0.1428571     & 0.495          & 0.495          & 0.99           & 0.99           
\end{tabular}
\end{table}

For the past several decades, multiple-systems estimation has been used to estimate hard to reach population sizes in sensitive human rights contexts \citep{Bird_2018, Ball_2019b}. This has resulted in the use of population size estimates to influence public policy and inform criminal tribunals in some cases \citep{Ball_2002b,Ball_2018a, Bird_2018, Ball_2019b}. For multiple-systems estimation to be used in such important contexts, the underlying methods must be well understood statistically. Since its publication, the latent class model of \cite{Manrique-Vallier_2016} has been used to estimate the sizes of the following populations: civilians killed in the Salvadoran civil war \citep{Sadinle_2018},
people disappeared on 17--19 May 2009 in Sri Lanka \citep{Ball_2018b},
women held in sexual slavery by the Japanese military during World War II in Palembang, Indonesia \citep{Ball_2018c},
civilians killed in the Peruvian internal conflict between 1980--2000 \citep{Manrique-Vallier_2019b},
social movement leaders killed in Colombia \citep{Angel_2019}, people killed in drug-related violence in the Philippines \citep{Ball_2019a},
people who inject drugs, men who have sex with men, and female sex workers in Kumpala, Uganda \citep{Doshi_2019}, and female sex workers in South Sudan \citep{Okiria_2019}. We could only find information on the number of latent classes used in two of these applications. \cite{Doshi_2019} had $K=3$ sources and used $J^*=10$ latent classes. \cite{Ball_2019a} used $J^*=5$ latent classes to produce results for six different strata, in which four of the strata had less than $K=10$ sources. Thus both of these applications presented results using nonidentifiable families of latent class models. In all of the other applications there were there less than $K=10$ sources. Thus, if the default setting of $J^*=5$ in the $\texttt{R}$ package $\texttt{LCMCR}$ was used, or any other $J^*$ not satisfying Theorem \ref{thm:lcmthm}, none of the families used were identified. Moving forward, we believe that it is imperative that families of models used for multiple-systems estimation in such sensitive contexts are known to be identified. 

\bibliographystyle{biometrika} 
\bibliography{main.bib}

\appendix
\section*{Appendix 1}
\label{sec:lemmaproof}
We will prove that $\Lambda$ is full rank for any $m\in\{0,\ldots, J\}$ by proving a stronger result. Recall that $K\geq 2$ and let $x_{\ell k}\in(0,1)$ for $\ell\in\{1,\ldots, K\}$ and $k\in\{1,\ldots, K\}$, such that $x_{\ell k}\neq x_{\ell k'}$ for $k\neq k'$. Let 
\begin{equation*}
X^K = 
\begin{pmatrix}
x_{1K} &  \cdots & x_{KK} \\
\vdots  & \ddots& \vdots \\
\prod_{k=1}^Kx_{1k}^{h_k}&\cdots& \prod_{k=1}^Kx_{Kk}^{h_k}\\
\vdots& \ddots & \vdots\\
\prod_{k=1}^Kx_{1k} &\cdots & \prod_{k=1}^Kx_{Kk}^{h_k}
\end{pmatrix},
\end{equation*}
where the rows of $X^K$ are indexed by $\bh\in H^*$. We will show that $X^K$ is full rank by induction on $K$. This implies that $\Lambda$ is full rank, as $J+m\leq 2J\leq K$ by assumption for any $m\in\{0,\ldots, J\}$.

For the base case when $K=2$, verifying $X^2$ is full rank is straightforward.
Assume that $X^{K-1}$ is full rank. Let $\bv\in\R^{K\times 1}$ be such that $X^K\bv=0$. For each $\bh\in \{\bh'\in H^*\mid h_K'=0\}$ we have that $v_K\prod_{k=1}^{K-1}x_{Kk}^{h_k}=-\sum_{\ell=1}^{K-1}v_{\ell}\prod_{k=1}^{K-1}x_{\ell k}^{h_k}$, which implies that $\sum_{\ell=1}^{K-1}v_{\ell}(x_{\ell K}-x_{KK})\prod_{k=1}^{K-1}x_{\ell k}^{h_k}=0$. For $\ell\in\{1,\ldots, K-1\}$, let $v_{\ell}'=v_{\ell}(x_{\ell K}-x_{KK})$ and $\bv'=(v_{1}',\ldots, v_{K-1}')$. This leads to the system of equations $X^{K-1}\bv'=0$. By the inductive assumption, $\bv'=0$. Since $x_{\ell K}\neq x_{KK}$ for  $\ell\in\{1,\ldots, K-1\}$, we have that $v_{\ell}=0$ for $\ell\in\{1,\ldots, K-1\}$, and thus $v_K=0$.

\section*{Appendix 2}
\label{sec:lemmaproof2}
We will now prove that $m_{Q,\bh}=A m_{R,\bh}$ for all $\bh\in H^*$, where $A=(2^{2J-1})/(2^{2J-1}-1) \neq 1$. Define the function $h(x)=(1-e^{\alpha x})^{2J}=\sum_{i=0}^{2J}\binom{2J}{i}(-1)^ie^{\alpha i x}$. For $t\in\{1,\ldots, K\}$, we can differentiate the series representation of $h$ to find that $h^{(t)}(x)= \sum_{i=0}^{2J}\binom{2J}{i}(-1)^i(\alpha i)^t e^{\alpha i x}$ and thus $h^{(t)}(x)\vert_{x=0}= \sum_{i=0}^{2J}\binom{2J}{ i}(-1)^i(\alpha i)^t=\sum_{i=1}^{2J}\binom{2J}{ i}(-1)^i(\alpha i)^t.$ We can alternatively differentiate the non-series representation of $h$ using the fact that $t\leq K <2J$ and the chain rule for higher order derivatives to find that $h^{(t)}(x)\vert_{x=0}=0$. Let $\bh\in H^*$ and $t=\sum_{k=1}^K h_k\in\{1,\ldots, K\}$. The desired result follows as
\begin{align*}
m_{Q,\bh}-A m_{R,\bh}&=
    \sum_{j=1}^J\nu_{Q, j} \prod_{k=1}^K\lambda_{Q,jk}^{h_k}-A\sum_{j=1}^J\nu_{R, j} \prod_{k=1}^K\lambda_{R,jk}^{h_k} \\
    &=\sum_{j=1}^J \binom{2J}{2j}(2^{2J-1}-1)^{-1}
    \prod_{k=1}^K\{\alpha(2j)\}^{h_k}- A\sum_{j=1}^J
    \binom{2J}{2j-1}(2^{2J-1})^{-1}
    \prod_{k=1}^K \{\alpha(2j - 1)\}^{h_k}\\
    &=(2^{2J-1}-1)^{-1}\sum_{i=1}^{2J} \binom{2J}{ i} (-1)^i(\alpha i)^t=(2^{2J-1}-1)^{-1}\{h^{(t)}(x)\vert_{x=0}\}=0. 
\end{align*} 

\end{document}